\let\OLDthebibliography\thebibliography
\renewcommand\thebibliography[1]{
  \OLDthebibliography{#1}
  \setlength{\parskip}{1pt}
  \setlength{\itemsep}{0pt plus 0.0ex}}
\def\numberlikeadb{\global\def\theequation{\thesection.\arabic{equation}}}
\newtheorem{theorem}{Theorem}[section]
\newtheorem{lemma}[theorem]{Lemma}
\newtheorem{corollary}[theorem]{Corollary}
\newtheorem{proposition}[theorem]{Proposition}
\newtheorem{remark}[theorem]{Remark}
\begin{document} 

\title{Absolute moments of the variance-gamma distribution 
}
\author{Robert E. Gaunt\footnote{Department of Mathematics, The University of Manchester, Oxford Road, Manchester M13 9PL, UK}  
} 

\date{} 
\maketitle

\vspace{-10mm}

\begin{abstract} We obtain exact formulas for the absolute raw and central moments of the variance-gamma distribution, as infinite series involving the modified Bessel function of the second kind and the modified Lommel function of the first kind. When the skewness parameter is equal to zero (the symmetric variance-gamma distribution), the infinite series reduces to a single term. Moreover, for the case that the shape parameter is a half-integer (in our parameterisation of the variance-gamma distribution), we obtain a closed-form expression for the absolute moments in terms of confluent hypergeometric functions. As a consequence, we deduce new exact formulas for the absolute raw and central moments of the asymmetric Laplace distribution and the product of two correlated zero mean normal random variables, and more generally the sum of independent copies of such random variables.



\end{abstract}

\noindent{{\bf{Keywords:}}} Variance-gamma distribution; absolute moment; asymmetric Laplace distribution; product of correlated normal random variables; modified Bessel function; hypergeometric function

\noindent{{{\bf{AMS 2010 Subject Classification:}}} Primary 60E05; 62E15; Secondary 33C10, 33C15, 33C20

\section{Introduction}

The variance-gamma (VG) distribution with parameters $\nu > -1/2$, $0\leq|\beta|<\alpha$, $\mu \in \mathbb{R}$, which we denote by $\mathrm{VG}(\nu,\alpha,\beta,\mu)$, has probability density function (PDF)
\begin{equation}\label{vgpdf} p(x) = M \mathrm{e}^{\beta (x-\mu)}|x-\mu|^{\nu}K_{\nu}(\alpha|x-\mu|), \quad x\in \mathbb{R},
\end{equation}
where 
\[M=M_{\nu,\alpha,\beta}=\frac{(\alpha^2-\beta^2)^{\nu+1/2}}{\sqrt{\pi}(2\alpha)^\nu \Gamma(\nu+1/2)},\]
and $K_\nu(x)$ is a modified Bessel function of the second kind. In this parameterisation, $\nu$ is a shape parameter, $\alpha$ is a scale parameter, $\beta$ is a skewness parameter, and $\mu$ is a location parameter. Other parametrisations are given in \cite{gaunt vg,kkp01,mcc98}. 
 Following the seminal works \cite{mcc98,madan}, the VG distribution has been widely used in financial modelling.
Due to the flexibility offered by the four parameters and the modified Bessel function of the second kind in the PDF, the VG distribution is often well-suited to statistical modelling in other application areas. For example, it was reported by \cite{s73} that the VG distribution provided an excellent fit for modelling the size of diamonds mined in South West Africa. The VG distribution also appears as an exact distribution in connection to sample covariances, Wishart matrices and Mat\'ern covariance functions \cite{vgsurvey}, has a well-known connection to the VG process (also referred to as Laplace motion; see, for example, \cite{kkp01}) and has recently found application in time series modelling \cite{ar1}, and in probability theory as a natural limit distribution \cite{aet21,gaunt vg}. These and further application areas and distributional properties are given in
the review \cite{vgsurvey} and Chapter 4 of the book \cite{kkp01}.

In this paper, we will contribute to the distributional theory of the VG distribution by obtaining new exact formulas for the absolute moments of the VG distribution. Absolute moments are a fundamental distributional property, and the topic of establishing exact formulas for absolute moments of probability distributions has received some attention in the literature, with examples including the normal distribution \cite{w12} (expressed in terms of the confluent hypergeometric function of the first kind), the generalized hyperbolic distribution \cite{barn1} (expressed in terms of an infinite series involving the modified Bessel function of the second kind), and the multivariate normal distribution \cite{o21} (very general formulas involving complex summations over multiple indices). Absolute moments arise in numerous applications, including bounding probabilities via Markov's inequality, quantitative probabilistic limit theorems (for example, the Berry-Esseen theorem), and even complicated formulas for absolute moments can yield insights in applications areas; see, for example, \cite{barn1} in which absolute moments of the normal inverse Gaussian (NIG) distribution were used to gain insight into the apparent scaling behaviour of NIG L\'evy processes. Exact formulas for absolute moments in terms of known special functions, which are implemented in modern computational algebra packages, allow for efficient and high precision computation, and allow researchers to take advantage of the extensive theory of the special functions collected in standard references such as \cite{olver} in applications of these formulas.

An exact formula for the moments of the VG distribution in terms of a finite sum involving gamma functions is given by \cite{scott}, which allows lower order moments to be calculated efficiently. Recently, \cite{gaunt23} obtained the following formula for the moments of the $\mathrm{VG}(\nu,\alpha,\beta,0)$ distribution, which simplified an earlier formula of \cite{ha04}.
Let $r\in\mathbb{Z}^+$, and define $\ell:=\lceil r/2\rceil+1/2$ and $m:=r\,\mathrm{mod}\,2$. 
Then, the moments of $X\sim\mathrm{VG}(\nu,\alpha,\beta,0)$ can be expressed in terms of the hypergeometric function: 
\begin{equation}\label{gam}\mathbb{E}[X^r]=\frac{2^{r}(2\beta/\alpha)^m(1-\beta^2/\alpha^2)^{\nu+1/2}}{\sqrt{\pi}\alpha^{r}\Gamma(\nu+1/2)}\Gamma(\nu+\ell)\Gamma(\ell)\,{}_2F_1\bigg(\ell,\nu+\ell;\frac{1}{2}+m;\frac{\beta^2}{\alpha^2}\bigg).
\end{equation}
In addition, \cite{gaunt23} obtained the following formula for the absolute raw moments of the $\mathrm{VG}(\nu,\alpha,\beta,0)$ distribution.
Let $X\sim\mathrm{VG}(\nu,\alpha,\beta,0)$. Then, for $r>\mathrm{max}\{-1,-2\nu-1\}$,
\begin{align}
\mathbb{E}[|X|^r]=\frac{2^r(1-\beta^2/\alpha^2)^{\nu+1/2}}{\sqrt{\pi}\alpha^{r}\Gamma(\nu+1/2)}\Gamma\Big(\nu+\frac{r+1}{2}\Big)\Gamma\Big(\frac{r+1}{2}\Big)\,{}_2F_1\bigg(\frac{r+1}{2},\nu+\frac{r+1}{2};\frac{1}{2};\frac{\beta^2}{\alpha^2}\bigg).\label{hamq}
\end{align}

In this paper, we address a natural problem by deriving exact formulas for the absolute raw moments of the VG distribution for the full range of parameter values without the restriction that the location parameter $\mu=0$. This is significant because in application areas such as financial modelling the location parameter is typically non-zero; see, for example, \cite{s04}. From our formulas for the raw absolute moments of the VG distribution, we immediately deduce  exact formulas for the absolute central moments $\mathbb{E}[|X-\mathbb{E}[X]|^r]$ of the VG distribution (see Remark \ref{rem1}).  Exact formulas for the absolute central moments of the VG distribution have previously not been available for the case of a non-zero skewness parameter $\beta$, even in the special case $\mu=0$. We provide  exact formulas for the absolute moments of the VG distribution for the full range of parameter values in Proposition \ref{prop0} and Theorem \ref{thm1}, and provide simpler closed-form formulas for the \emph{symmetric variance-gamma} distribution ($\beta=0$) in Corollary \ref{cor1} and for half-integer shape parameter $\nu$ in Theorem \ref{thm2}. Our general formula in Theorem \ref{thm1} is expressed in terms of an infinite series involving the modified Bessel function of the second and the modified Lommel function of the first kind; comments on the convergence
of the series are given in Remark \ref{rem9} and a simple asymptotic approximation of the absolute moments is given in Proposition \ref{prop1}. We remark that formulas for the absolute raw moments of the generalized hyperbolic distribution (with includes the VG distribution as a special case) have been obtained by \cite{barn1}; however, these formulas are given for the case that the location parameter is zero (the formulas are equivalently stated as the absolute moments about the location parameter).

The $\mathrm{VG}(1/2,\alpha,\beta,\mu)$ distribution corresponds to the asymmetric Laplace distribution (see the excellent book \cite{kkp01} for a comprehensive account of the distributional theory and application areas), so from Theorem \ref{thm2} we immediately obtain formulas for the absolute raw and central moments of all order of the asymmetric Laplace distribution (see Section \ref{sec3}). Surprisingly, our general formula appears to be new. Moreover, when specialised to the case of the absolute first central moment, our formula corrects an erroneous formula of \cite{kkp01}.
More generally, the sum of independent and identically distributed Laplace random variables is VG distributed, so we also obtain formulas for the absolute moments of such random variables.  
The product of two correlated zero mean normal random variables, and more generally the sum of independent copies of such random variables, are also VG distributed \cite{gaunt prod}, and so we also immediately deduce new exact formulas for the absolute central moments of these distributions. 
These distributions themselves have numerous applications that date back to the 1930's with the work of \cite{craig,wb32}; see \cite{gaunt22,np16} for an overview of application areas and distributional properties.

\section{Absolute moments of the variance-gamma distribution}

We begin by noting that the absolute moments of the VG distribution take a simple form in the case $r\in2\mathbb{Z}^+=\{2,4,6,\ldots\}$. 

\begin{proposition}\label{prop0}Let $X\sim \mathrm{VG}(\nu,\alpha,\beta,\mu)$, where $\nu > -1/2$, $0\leq|\beta|<\alpha$, $\mu\in\mathbb{R}$. Let $p:=\lceil k/2\rceil+1/2$ and $q=k\,\mathrm{mod}\,2$. Then, for $r\in2\mathbb{Z}^+$,
\begin{align*}
\mathbb{E}[|X|^r]=\frac{(1-\beta^2/\alpha^2)^{\nu+1/2}}{\sqrt{\pi}\alpha^r\Gamma(\nu+1/2)}\sum_{k=0}^r\binom{r}{k}2^k(\alpha\mu)^{r-k}\bigg(\frac{2\beta}{\alpha}\bigg)^q\Gamma(\nu+p)\Gamma(p)\,{}_2F_1\bigg(p,\nu+p;\frac{1}{2}+q;\frac{\beta^2}{\alpha^2}\bigg).  
\end{align*}
\end{proposition}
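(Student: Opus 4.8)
The plan is to reduce the claim to the already-known raw-moment formula \eqref{gam} by a translation argument. The starting point is the elementary observation that for $r\in 2\mathbb{Z}^+$ one has $|X|^r=X^r$, so it suffices to compute the ordinary raw moment $\mathbb{E}[X^r]$. Next I would exploit the fact, immediate from the form of the density \eqref{vgpdf}, that the VG family is closed under translation: if $X\sim\mathrm{VG}(\nu,\alpha,\beta,\mu)$ then $Y:=X-\mu\sim\mathrm{VG}(\nu,\alpha,\beta,0)$. Since the $\mathrm{VG}(\nu,\alpha,\beta,0)$ law has exponentially decaying tails, all moments of $Y$ are finite and the manipulations below are fully justified.

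Writing $X=Y+\mu$ and applying the binomial theorem together with linearity of expectation gives
\[
\mathbb{E}[X^r]=\sum_{k=0}^r\binom{r}{k}\mu^{r-k}\,\mathbb{E}[Y^k].
\]
I would then substitute formula \eqref{gam} for $\mathbb{E}[Y^k]$, that is, \eqref{gam} with $r$ replaced by $k$; under this substitution the quantities $\ell=\lceil r/2\rceil+1/2$ and $m=r\bmod 2$ appearing in \eqref{gam} become precisely $p=\lceil k/2\rceil+1/2$ and $q=k\bmod 2$. Factoring the $k$-independent constant $(1-\beta^2/\alpha^2)^{\nu+1/2}/(\sqrt{\pi}\,\Gamma(\nu+1/2))$ out of the sum, and using the identity $\mu^{r-k}/\alpha^k=(\alpha\mu)^{r-k}/\alpha^r$ to pull a common $\alpha^{-r}$ out as well, the remaining expression is exactly the claimed sum.

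The argument is essentially bookkeeping, so there is no substantive obstacle; the only points requiring care are (i) correctly transcribing the index substitution $r\mapsto k$ in \eqref{gam}, so that $\ell$ and $m$ are consistently replaced by $p$ and $q$ everywhere (including inside the ${}_2F_1$), and (ii) the algebraic regrouping of the powers of $\alpha$ and $\mu$ needed to bring the answer into the stated normalised form. No convergence or interchange-of-limit issues arise, since the sum over $k$ is finite and every moment of the $\mu=0$ VG law exists.
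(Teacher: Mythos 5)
Your proposal is correct and follows exactly the paper's own argument: for even $r$ one has $|X|^r=X^r$, the translation $X=_d Y+\mu$ with $Y\sim\mathrm{VG}(\nu,\alpha,\beta,0)$ reduces the problem to the binomial expansion $\sum_{k=0}^r\binom{r}{k}\mu^{r-k}\mathbb{E}[Y^k]$, and substituting formula (\ref{gam}) with $r$ replaced by $k$ gives the stated expression. The bookkeeping with the powers of $\alpha$ and the index substitution $\ell\mapsto p$, $m\mapsto q$ is exactly as you describe.
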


\begin{proof}
Let $X\sim \mathrm{VG}(\nu,\alpha,\beta,\mu)$. Then it is easily seen that $X=_d Y+\mu$, where $Y\sim \mathrm{VG}(\nu,\alpha,\beta,0)$. Therefore, since $r$ is even, we have that $\mathbb{E}[|X|^r]=\mathbb{E}[(Y+\mu)^r]=\sum_{k=0}^r\binom{r}{k}\mu^{r-k}\mathbb{E}[Y^k]$, and using the formula (\ref{gam}) for $\mathbb{E}[Y^k]$ now yields the desired formula.  
\end{proof}

\begin{remark}\label{rem1}
The mean of $X\sim \mathrm{VG}(\nu,\alpha,\beta,\mu)$ is given by 
$\mathbb{E}[X]=\mu+(2\nu+1)\beta/(\alpha^2-\beta^2)$.
Combining this formula with the fact that $X+a\sim\mathrm{VG}(\nu,\alpha,\beta,\mu+a)$, we have that
\[X-\mathbb{E}[X]\sim\mathrm{VG}\bigg(\nu,\alpha,\beta,-\frac{(2\nu+1)\beta}{\alpha^2-\beta^2}\bigg).\]
Therefore a formula for the absolute central moments of even order $r$ follows from setting $\mu=-(2\nu+1)\beta/(\alpha^2-\beta^2)$ in the formula of Proposition \ref{prop0}. In the same manner, we can immediately deduce formulas for the absolute central moments of the VG distribution from all other formulas for the absolute raw moments of the VG distribution that are given in this paper.
\end{remark}

In virtue of Proposition \ref{prop0}, we will henceforth mostly focus on the case that $r\geq1$ is an odd number. The following theorem provides an exact formula for the absolute raw moments of odd order of the VG distribution for the full range of parameter values. For $\mu\geq\nu>-1/2$, let
\begin{align}\label{gmn2}G_{\mu,\nu}(x)=x\big(K_{\nu}( x)\tilde{t}_{\mu-1,\nu-1}( x)+K_{\nu-1}( x)\tilde{t}_{\mu,\nu}( x)\big),
\end{align}
where $\tilde{t}_{\mu,\nu}(x)$ is a normalisation of the modified Lommel function of the first kind that was introduced by \cite{gaunt lommel},
\begin{align*}\tilde{t}_{\mu,\nu}(x)
=\frac{1}{2^{\mu+1}\Gamma((\mu-\nu+3)/2)\Gamma((\mu+\nu+3)/2)} {}_1F_2\bigg(1;\frac{\mu-\nu+3}{2},\frac{\mu+\nu+3}{2};\frac{x^2}{4}\bigg).
\end{align*}
In interpreting the formula in the theorem, it should be noted that, for fixed $\mu\geq\nu>-1/2$, $G_{\mu,\nu}(x)$ is an increasing function of $x$ on $(0,\infty)$ satisfying $0<G_{\mu,\nu}(x)<1$, $x>0$, with $G_{\mu,\nu}(0)=0$ (see \cite{gaunt24}). We also have the special cases
\begin{align}\label{twice}
G_{\nu,\nu}(x)&=x\big(K_{\nu}( x)\mathbf{L}_{\nu-1}( x)+K_{\nu-1}( x)\mathbf{L}_{\nu}( x)\big),   \quad
G_{\nu+1,\nu}(x)=1-\frac{x^{\nu+1}K_{\nu+1}(x)}{2^\nu\Gamma(\nu+1)},
\end{align}
where $\mathbf{L}_\nu(x)$ is the modified Struve function of the first kind. The first formula in (\ref{twice}) follows because $\tilde{t}_{\nu,\nu}(x)=\mathbf{L}_\nu(x)$ and the second formula is obtained as follows. It was shown in \cite{gaunt24} that, for $\mu\geq\nu>-1/2$ and $x>0$,
\begin{align}
\label{370}\int_0^x t^\mu K_\nu(t)\,\mathrm{d}t=2^{\mu-1}\Gamma\bigg(\frac{\mu-\nu+1}{2}\bigg)\Gamma\bigg(\frac{\mu+\nu+1}{2}\bigg)G_{\mu,\nu}(x).
\end{align}
Moreover, in the case $\mu=\nu+1$, formula (\ref{370}) simplifies to $\int_0^x t^{\nu+1} K_\nu(t)\,\mathrm{d}t=2^\nu\Gamma(\nu+1)-x^{\nu+1}K_{\nu+1}(x)$ (which is obtained using \cite[equation 10.43.1]{olver} and the limiting form (\ref{Ktend0})), and the second formula of (\ref{twice}) follows from combining this integral formula with (\ref{370}). Finally, we let $\mathrm{sgn}(x)$ denote the sign function, $\mathrm{sgn}(x)=1$ for $x>0$, $\mathrm{sgn}(0)=0$, $\mathrm{sgn}(x)=-1$ for $x<0$
(the value of $\mathrm{sgn}(x)$ at $x=0$ will, however, be irrelevant in this paper).

\begin{theorem}\label{thm1} Let $X\sim \mathrm{VG}(\nu,\alpha,\beta,\mu)$, where $\nu > -1/2$, $0\leq|\beta|<\alpha$, $\mu\in\mathbb{R}$.  Then, for odd $r\geq1$,
\begin{align}
\mathbb{E}[|X|^r]&=\frac{2^r(1-\beta^2/\alpha^2)^{\nu+1/2}}{\sqrt{\pi}\alpha^{r}\Gamma(\nu+1/2)}\bigg\{\sum_{k=0}^{(r-1)/2}\bigg(\frac{\alpha\mu}{2}\bigg)^{2k}\Gamma\Big(\frac{r-2k+1}{2}\Big)\Gamma\Big(\nu+\frac{r-2k+1}{2}\Big)\nonumber\\
&\quad\times\bigg[\binom{r}{2k}{}_2F_1\bigg(\frac{r-2k+1}{2},\nu+\frac{r-2k+1}{2};\frac{1}{2};\frac{\beta^2}{\alpha^2}\bigg)\nonumber\\
&\quad+\beta\mu\binom{r}{2k+1}{}_2F_1\bigg(\frac{r-2k+1}{2},\nu+\frac{r-2k+1}{2};\frac{3}{2};\frac{\beta^2}{\alpha^2}\bigg)\bigg]\nonumber \\
  &\quad -\sum_{k=0}^{r} \binom{r}{k}(-1)^k\bigg(\frac{\alpha|\mu|}{2}\bigg)^k\sum_{j=0}^\infty\frac{(-2\mathrm{sgn}(\mu)\beta/\alpha)^j}{j!}\Gamma\Big(\frac{r+j-k+1}{2}\Big)\nonumber\\
&\quad\times \Gamma\Big(\nu+\frac{r+j-k+1}{2}\Big)G_{\nu+r+j-k,\nu}(\alpha|\mu|)\bigg\}. \label{formula1} 
\end{align}
\end{theorem}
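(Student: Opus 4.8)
The plan is to reduce to $\mu\ge0$, decompose $\mathbb{E}[|X|^r]$ into a complete integral over $(0,\infty)$ that produces the hypergeometric terms and an incomplete integral over $(0,\mu)$ that produces the modified Lommel series, and evaluate each piece. For the reduction: $-X\sim\mathrm{VG}(\nu,\alpha,-\beta,-\mu)$ with $|{-X}|^r=|X|^r$, and the right-hand side of (\ref{formula1}) is invariant under $(\beta,\mu)\mapsto(-\beta,-\mu)$ (the first double sum depends on $\mu$ only through $\mu^2$ and the product $\beta\mu$, and the second only through $|\mu|$ and $\mathrm{sgn}(\mu)\beta$), so it suffices to prove the formula for $\mu\ge0$. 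When $\mu=0$ the statement is exactly (\ref{hamq}), and one checks that (\ref{formula1}) collapses to it: $G_{\mu,\nu}(0)=0$ annihilates the second sum, and only the $k=0$, $\beta\mu$-free term survives in the first. So assume $\mu>0$; note that all moments of $X$ are finite since the density decays exponentially.

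Substituting $z=x-\mu$ and then $z\mapsto-z$ on the negative half-line writes $\mathbb{E}[|X|^r]$ as $M\int_0^\infty(z+\mu)^r\mathrm{e}^{\beta z}z^\nu K_\nu(\alpha z)\,\mathrm{d}z+M\int_0^\infty|\mu-z|^r\mathrm{e}^{-\beta z}z^\nu K_\nu(\alpha z)\,\mathrm{d}z$. Splitting the second integral at $z=\mu$ and using that $r$ is odd, so $|\mu-z|^r=-(z-\mu)^r$ on $(0,\mu)$, one obtains after collecting terms $\mathbb{E}[|X|^r]=D+E$, where
\[D=M\int_0^\infty\Big[(z+\mu)^r\mathrm{e}^{\beta z}+(z-\mu)^r\mathrm{e}^{-\beta z}\Big]z^\nu K_\nu(\alpha z)\,\mathrm{d}z,\qquad E=-2M\int_0^\mu(z-\mu)^r\mathrm{e}^{-\beta z}z^\nu K_\nu(\alpha z)\,\mathrm{d}z.\]
To evaluate $E$ I would expand $(z-\mu)^r$ by the binomial theorem and $\mathrm{e}^{-\beta z}$ as a power series, interchanging the $j$-summation with $\int_0^\mu$ (justified by Tonelli's theorem, since $\int_0^\mu\mathrm{e}^{|\beta|z}z^{\nu+r-k}K_\nu(\alpha z)\,\mathrm{d}z<\infty$ because $z^\nu K_\nu(\alpha z)$ is integrable at the origin for $\nu>-1/2$ and $r-k\ge0$), and then apply (\ref{370}) to each $\int_0^{\alpha\mu}v^{\nu+r-k+j}K_\nu(v)\,\mathrm{d}v$ (admissible since $\nu+r-k+j\ge\nu$). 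Inserting the explicit value of $M$ and simplifying the powers of $2$ and $\alpha$ then gives precisely the triple sum in (\ref{formula1}); its absolute convergence follows from the same Tonelli estimate (cf.\ Remark~\ref{rem9}).

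For $D$, expanding $(z\pm\mu)^r$ binomially shows that $\mu^k$ is multiplied by $\mathrm{e}^{\beta z}+(-1)^k\mathrm{e}^{-\beta z}$, which is $2\cosh(\beta z)$ for $k$ even and $2\sinh(\beta z)$ for $k$ odd. For even $k$ one has $2M\int_0^\infty z^{\nu+r-k}\cosh(\beta z)K_\nu(\alpha z)\,\mathrm{d}z=\mathbb{E}[|Y|^{r-k}]$ with $Y\sim\mathrm{VG}(\nu,\alpha,\beta,0)$, which is (\ref{hamq}). For odd $k$ I need the companion identity
\[2M\int_0^\infty z^{\nu+s}\sinh(\beta z)K_\nu(\alpha z)\,\mathrm{d}z=\frac{2^{s+1}\beta\,(1-\beta^2/\alpha^2)^{\nu+1/2}}{\sqrt{\pi}\,\alpha^{s+1}\Gamma(\nu+1/2)}\Gamma\Big(\frac{s}{2}+1\Big)\Gamma\Big(\nu+\frac{s}{2}+1\Big)\,{}_2F_1\Big(\frac{s}{2}+1,\nu+\frac{s}{2}+1;\frac{3}{2};\frac{\beta^2}{\alpha^2}\Big),\]
valid for $s\ge0$, obtained exactly as (\ref{gam}) was derived in \cite{gaunt23}: write $\sinh(\beta z)=\tfrac12(\mathrm{e}^{\beta z}-\mathrm{e}^{-\beta z})$, evaluate $\int_0^\infty z^{\nu+s}\mathrm{e}^{\pm\beta z}K_\nu(\alpha z)\,\mathrm{d}z$ by the classical Laplace-type integral of $K_\nu$ (see, e.g., \cite{olver}; convergent as $|\beta|<\alpha$), and apply a quadratic transformation of the Gauss hypergeometric function to bring the argument to $\beta^2/\alpha^2$. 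Re-indexing the even and odd parts of the $k$-sum as $k=2i$ and $k=2i+1$ with $0\le i\le(r-1)/2$ and substituting the two evaluations identifies $D$ with the first double sum of (\ref{formula1}); adding $D$ and $E$ gives (\ref{formula1}).

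The one genuinely delicate point is the evaluation of the $\cosh$ and especially the $\sinh$ integrals in the form above — in particular the appearance of the two distinct lower hypergeometric parameters $1/2$ and $3/2$, which comes out of the quadratic transformation and must be handled with care. The remainder (the split into $D$ and $E$, the use of (\ref{370}), and the matching of constants against (\ref{formula1})) is lengthy but mechanical.
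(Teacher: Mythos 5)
Your proposal is correct and follows essentially the same route as the paper: translate to centre the Bessel factor, split the odd-moment integral into a complete integral over $(0,\infty)$ and an incomplete integral over $(0,\alpha|\mu|)$, expand binomially and in powers of $\beta$, and evaluate the incomplete pieces with (\ref{370}). The only difference is one of packaging in the complete-integral part: the paper expands $\mathrm{e}^{\gamma t}$ as a power series and re-sums the resulting gamma-function series into the ${}_2F_1$'s directly, which is precisely the derivation of your $\cosh$ and $\sinh$ identities, so your appeal to (\ref{hamq}) and its $\sinh$ companion is the same computation cited rather than redone.
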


\begin{remark} It is a natural question to ask for exact formulas for the absolute moments of the $\mathrm{VG}(\nu,\alpha,\beta,\mu)$ distribution for non-integer $r$. Such formulas could be obtained by adapting the proof of Theorem \ref{thm1}. In particular, one would apply the generalised binomial theorem to expand quantities of the form $(t\pm\alpha\mu)^r$ in the integrands of integrals (with respect to the variable $t$). As such, the formulas for absolute moments for non-integer $r$ would be expressed in terms of a double infinite series, rather than a single infinite series as given in formula (\ref{formula1}) of Theorem \ref{thm1} for integer $r$. Also, to ensure that the series expansion
of  $(t\pm\alpha\mu)^r$ is absolutely convergent, one would need to integrate over different intervals than in
the proof of Theorem \ref{thm1}, which would lead to a less clean final formula.
However, as we will see below in Theorem \ref{thm2}, closed-form formulas for non-integer $r$ can be given in the case that $\nu$ is a half-integer.  
\end{remark}


\begin{corollary}\label{cor1}Let $X\sim \mathrm{VG}(\nu,\alpha,0,\mu)$, where $\nu > -1/2$, $\alpha>0$, $\mu\in\mathbb{R}$. Then, for odd $r\geq1$,
\begin{align}
\mathbb{E}[|X|^r]&=\frac{2^r}{\sqrt{\pi}\alpha^{r}\Gamma(\nu+1/2)}\bigg\{\sum_{k=0}^{(r-1)/2}\binom{r}{2k}\bigg(\frac{\alpha\mu}{2}\bigg)^{2k}\Gamma\Big(\frac{r-2k+1}{2}\Big)\Gamma\Big(\nu+\frac{r-2k+1}{2}\Big)\nonumber\\
&\quad -\sum_{k=0}^{r} \binom{r}{k}(-1)^k\bigg(\frac{\alpha|\mu|}{2}\bigg)^k\Gamma\Big(\frac{r-k+1}{2}\Big)\Gamma\Big(\nu+\frac{r-k+1}{2}\Big)G_{\nu+r-k,\nu}(\alpha|\mu|)\bigg\}.\label{rrr}
\end{align}
When $r=1$, we have a further simplification:
\begin{align*}
\mathbb{E}[|X|]&=\frac{1}{\alpha}\bigg[(\alpha\mu)^2\big(K_{\nu}( \alpha|\mu|)\mathbf{L}_{\nu-1}( \alpha|\mu|)+K_{\nu-1}( \alpha|\mu|)\mathbf{L}_{\nu}( \alpha|\mu|)\big)+\frac{(\alpha|\mu|)^{\nu+1}K_{\nu+1}(\alpha|\mu|)}{\sqrt{\pi}2^{\nu-1}\Gamma(\nu+1/2)}\bigg].
\end{align*}
\end{corollary}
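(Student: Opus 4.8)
The plan is to obtain (\ref{rrr}) by a direct specialisation of Theorem \ref{thm1} to $\beta=0$, and then to extract the $r=1$ case by an explicit short computation.

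First I would put $\beta=0$ in (\ref{formula1}). Four simplifications occur at once: the prefactor $(1-\beta^2/\alpha^2)^{\nu+1/2}$ equals $1$; every Gauss hypergeometric function ${}_2F_1(\cdot,\cdot;\cdot;\beta^2/\alpha^2)$ reduces to ${}_2F_1(\cdot,\cdot;\cdot;0)=1$; the summand carrying the factor $\beta\mu\binom{r}{2k+1}$ vanishes identically; and in the inner series over $j$ the factor $(-2\,\mathrm{sgn}(\mu)\beta/\alpha)^j$ equals $0$ for every $j\geq1$ and equals $1$ for $j=0$, so only the $j=0$ term survives (in particular no convergence question arises). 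Collecting the surviving terms gives precisely (\ref{rrr}); as a consistency check, setting $\mu=0$ here collapses both sums to their $k=0$ terms and recovers (\ref{hamq}) with $\beta=0$.

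For the $r=1$ refinement I would substitute $r=1$ into (\ref{rrr}). In the first sum the only index is $k=0$, and since $(r-2k+1)/2=1$ there it contributes $\binom{1}{0}\Gamma(1)\Gamma(\nu+1)=\Gamma(\nu+1)$. In the second sum the indices are $k=0$ and $k=1$: the $k=0$ term equals $\Gamma(1)\Gamma(\nu+1)\,G_{\nu+1,\nu}(\alpha|\mu|)$, while the $k=1$ term (where $(r-k+1)/2=1/2$, so $\Gamma(1/2)=\sqrt{\pi}$) equals $-(\alpha|\mu|/2)\sqrt{\pi}\,\Gamma(\nu+1/2)\,G_{\nu,\nu}(\alpha|\mu|)$. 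Hence
\begin{align*}
\mathbb{E}[|X|]=\frac{2}{\sqrt{\pi}\,\alpha\,\Gamma(\nu+1/2)}\bigg\{\Gamma(\nu+1)\big(1-G_{\nu+1,\nu}(\alpha|\mu|)\big)+\frac{\alpha|\mu|}{2}\sqrt{\pi}\,\Gamma(\nu+1/2)\,G_{\nu,\nu}(\alpha|\mu|)\bigg\}.
\end{align*}

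Finally I would insert the two special evaluations from (\ref{twice}), namely $1-G_{\nu+1,\nu}(x)=x^{\nu+1}K_{\nu+1}(x)/(2^\nu\Gamma(\nu+1))$ and $G_{\nu,\nu}(x)=x\big(K_{\nu}(x)\mathbf{L}_{\nu-1}(x)+K_{\nu-1}(x)\mathbf{L}_{\nu}(x)\big)$, both evaluated at $x=\alpha|\mu|$. Then the factor $\Gamma(\nu+1)$ cancels between the numerator and the denominator of $1-G_{\nu+1,\nu}$, the factor $\sqrt{\pi}\,\Gamma(\nu+1/2)$ in the Struve term cancels against the same factor in the overall prefactor, and, using $(\alpha\mu)^2=(\alpha|\mu|)^2$, one arrives at the claimed closed form. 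There is no real obstacle beyond careful bookkeeping; the one point that needs attention is the arguments of the gamma functions at $r=1$, since it is precisely the asymmetric combination ($\Gamma(\nu+1)$ on the $K_{\nu+1}$ term, $\sqrt{\pi}\,\Gamma(\nu+1/2)$ on the Struve term) that makes the two advertised cancellations go through.
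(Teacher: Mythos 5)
Your proposal is correct and follows exactly the paper's own route: set $\beta=0$ in (\ref{formula1}), use ${}_2F_1(a,b;c;0)=1$ (with only the $j=0$ term of the inner series surviving), and then specialise to $r=1$ using the two evaluations in (\ref{twice}). The explicit bookkeeping of the $k=0$ and $k=1$ terms and the two cancellations matches the computation the paper leaves implicit.
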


\begin{proof}
Formula (\ref{rrr}) follows from setting $\beta=0$ in (\ref{formula1}) and then using that ${}_2F_1(a,b;c;0)=1$. The simplification in the case $r=1$ follows from applying the formulas for $G_{\nu,\nu}(x)$ and $G_{\nu+1,\nu}(x)$ given in (\ref{twice}) to equation (\ref{rrr}).   
\end{proof}

\vspace{2mm}

\noindent \emph{Proof of Theorem \ref{thm1}.} By a change of variables followed by an application of the power series expansion of the exponential function, and interchanging the order of summation and integration, 
\begin{align}
\mathbb{E}[|X|^r]
&=\frac{M}{\alpha^{\nu+r+1}}\int_{-\infty}^\infty |t+u|^r\mathrm{e}^{\gamma t}|t|^\nu K_\nu(|t|)\,\mathrm{d}t=\frac{M}{\alpha^{\nu+r+1}}\sum_{j=0}^\infty\frac{\gamma^j}{j!} I_{j}, \label{sub}
\end{align}
where $I_j=\int_{-\infty}^\infty |t+u|^r t^j|t|^\nu K_\nu(|t|)\,\mathrm{d}t$, $j\geq0$,
and $u=\alpha\mu$ and $\gamma=\beta/\alpha$. 

We will now suppose that $u>0$; the case $u<0$ is similar and is omitted. For $u>0$, after some basic manipulations, we can write
\begin{align*}
I_j
=\int_0^\infty [(t+u)^r+(-1)^j(t-u)^r]t^{\nu+j}K_\nu(t)\,\mathrm{d}t +2(-1)^j\int_0^u(u-t)^r t^{\nu+j}K_\nu(t)\,\mathrm{d}t,
\end{align*}
where we made use of the fact that $r\geq1$ is odd. Using the binomial series
and then evaluating the integrals using (\ref{370}) equation 6.561(16) of \cite{gradshetyn} we obtain that
\begin{align*}
\mathbb{E}[|X|^r]&=\frac{2M}{\alpha^{\nu+r+1}}\bigg[\sum_{j=0}^\infty\frac{\gamma^{2j}}{(2j)!}\sum_{k=0}^{(r-1)/2}\binom{r}{2k}u^{2k}\int_0^\infty t^{\nu+r+2j-2k}K_\nu(t)\,\mathrm{d}t\\
&\quad+\sum_{j=0}^\infty\frac{\gamma^{2j+1}}{(2j+1)!}\sum_{k=0}^{(r-1)/2}\binom{r}{2k+1}u^{2k+1}\int_0^\infty t^{\nu+r+2j-2k}K_\nu(t)\,\mathrm{d}t\\
&\quad-\sum_{j=0}^\infty\frac{(-\gamma)^j}{j!}\sum_{k=0}^r\binom{r}{k}(-u)^k\int_0^u t^{\nu+r+j-k}K_\nu(t)\,\mathrm{d}t\bigg]  \\
&=\frac{2^{\nu+r}M}{\alpha^{\nu+r+1}}\bigg[\sum_{k=0}^{(r-1)/2}\binom{r}{2k}\frac{u^{2k}}{2^{2k}}S_{1,k}+\sum_{k=0}^{(r-1)/2}\binom{r}{2k+1}\frac{u^{2k+1}}{2^{2k+1}}S_{2,k}\\
&\quad-\!\sum_{j=0}^\infty\!\frac{(-\gamma)^j}{j!}\!\sum_{k=0}^r\binom{r}{k}2^{j-k}(-u)^k\Gamma\Big(\frac{r+j-k+1}{2}\Big)\Gamma\Big(\nu+\frac{r+j-k+1}{2}\Big) G_{\nu+r+j-k,\nu}(u)\bigg],
\end{align*}
where
\begin{align*}
S_{1,k}&=\sum_{j=0}^\infty\frac{(2\gamma)^{2j}}{(2j)!}\Gamma\Big(\frac{r+2j-2k+1}{2}\Big)\Gamma\Big(\nu+\frac{r+2j-2k+1}{2}\Big)\\
&=\Gamma\Big(\frac{r-2k+1}{2}\Big)\Gamma\Big(\nu+\frac{r-2k+1}{2}\Big){}_2F_1\bigg(\frac{r-2k+1}{2},\nu+\frac{r-2k+1}{2};\frac{1}{2};\gamma^2\bigg),\\
S_{2,k}&=\sum_{j=0}^\infty\frac{(2\gamma)^{2j+1}}{(2j+1)!}\Gamma\Big(\frac{r+2j-2k+1}{2}\Big)\Gamma\Big(\nu+\frac{r+2j-2k+1}{2}\Big)\\
&=2\gamma\Gamma\Big(\frac{r-2k+1}{2}\Big)\Gamma\Big(\nu+\frac{r-2k+1}{2}\Big){}_2F_1\bigg(\frac{r-2k+1}{2},\nu+\frac{r-2k+1}{2};\frac{3}{2};\gamma^2\bigg),
\end{align*}
which gives formula (\ref{formula1}).
\qed

\vspace{3mm}

For half-integer $\nu$, we obtain a simpler closed-form formula
expressed in terms of the confluent hypergeometric functions $M(a,b,x)$ and $U(a,b,x)$ (see \cite[Chapter 13]{olver}) and the beta function.

\begin{theorem}\label{thm2}
Let $X\sim \mathrm{VG}(\nu,\alpha,\beta,\mu)$, where $\nu=m+1/2$ for $m=0,1,2,\ldots$, and $0\leq|\beta|<\alpha$, $\mu\in\mathbb{R}$. Then, for $r>-1$,
\begin{align}
\mathbb{E}[|X|^r]&=\frac{(1-\beta^2/\alpha^2)^{m+1}}{2^{m+1}m!\alpha^r}\sum_{j=0}^{m}\frac{(m+j)!}{(m-j)!j!2^j}\bigg[\frac{\mathrm{e}^{-\alpha|\mu|-\beta\mu}\Gamma(r+1)}{(1+\mathrm{sgn}(\mu)\beta/\alpha)^{r+m-j+1}}\nonumber\\
&\quad\times U(j-m,j-m-r,\alpha|\mu|+\beta\mu)\nonumber\\
&\quad+(\alpha|\mu|)^{r+m-j+1}B(r+1,m-j+1)M(m-j+1,r+m-j+2,-\alpha|\mu|-\beta\mu)\nonumber\\
&\quad+(\alpha|\mu|)^{r+m-j+1}(m-j)!U(m-j+1,r+m-j+2,\alpha|\mu|-\beta\mu)\bigg], \quad \mu\not=0.  \label{halfv}  
\end{align}
whilst, for $\mu=0$,
\begin{align}
\mathbb{E}[|X|^r]=\frac{(1-\beta^2/\alpha^2)^{m+1}}{2^{m+1}m!\alpha^r}\sum_{j=0}^{m}\frac{(m+j)!}{(m-j)!j!2^j}\bigg[\frac{\Gamma(r+1)}{(1+\beta/\alpha)^{r+m-j+1}}+\frac{\Gamma(r+1)}{(1-\beta/\alpha)^{r+m-j+1}}\bigg].\label{halfmu0}
\end{align}
\end{theorem}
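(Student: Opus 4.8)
The plan is to exploit that, for half-integer order $\nu=m+1/2$, the modified Bessel function $K_\nu$ degenerates to an elementary finite sum, which makes the moment integral explicitly computable with no analytic subtleties. Starting from the representation $\mathbb{E}[|X|^r]=\frac{M}{\alpha^{\nu+r+1}}\int_{-\infty}^\infty|t+u|^r\mathrm{e}^{\gamma t}|t|^{\nu}K_{\nu}(|t|)\,\mathrm{d}t$ in \eqref{sub} (with $u=\alpha\mu$, $\gamma=\beta/\alpha$), I would substitute the closed form $K_{m+1/2}(z)=\sqrt{\pi/(2z)}\,\mathrm{e}^{-z}\sum_{k=0}^{m}\frac{(m+k)!}{k!(m-k)!(2z)^{k}}$ (see \cite[Section~10.49]{olver}), which yields $|t|^{\nu}K_{\nu}(|t|)=\sqrt{\pi/2}\,\mathrm{e}^{-|t|}\sum_{j=0}^{m}\frac{(m+j)!}{j!(m-j)!2^{j}}|t|^{m-j}$. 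Since the sum is finite it comes out of the integral with no convergence concern, leaving
\[\mathbb{E}[|X|^r]=\frac{M\sqrt{\pi/2}}{\alpha^{\nu+r+1}}\sum_{j=0}^{m}\frac{(m+j)!}{j!(m-j)!2^{j}}\,J_j,\qquad J_j:=\int_{-\infty}^\infty|t+u|^r\mathrm{e}^{\gamma t-|t|}|t|^{m-j}\,\mathrm{d}t.\]
A short computation using $\Gamma(\nu+1/2)=m!$ and $(\alpha^2-\beta^2)^{m+1}/\alpha^{2m+2}=(1-\beta^2/\alpha^2)^{m+1}$ shows that $M\sqrt{\pi/2}/\alpha^{\nu+r+1}$ equals the prefactor $(1-\beta^2/\alpha^2)^{m+1}/(2^{m+1}m!\alpha^r)$ in both \eqref{halfv} and \eqref{halfmu0}, so it remains to identify $J_j$ with the bracketed expression in each case.

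I would treat $\mu>0$ first. Splitting $\mathbb{R}$ at $t=-u$ and $t=0$ and applying $t\mapsto-t$ on the two negative pieces writes $J_j$ as the sum of three elementary integrals: $\int_0^\infty(t+u)^rt^{m-j}\mathrm{e}^{-(1-\gamma)t}\,\mathrm{d}t$ (from $t>0$), $\int_0^u(u-t)^rt^{m-j}\mathrm{e}^{-(1+\gamma)t}\,\mathrm{d}t$ (from $-u<t<0$), and $\int_u^\infty(t-u)^rt^{m-j}\mathrm{e}^{-(1+\gamma)t}\,\mathrm{d}t$ (from $t<-u$). Rescaling the variable by $u$, the first and third have the Euler form $U(a,b,z)=\frac{1}{\Gamma(a)}\int_0^\infty\mathrm{e}^{-zt}t^{a-1}(1+t)^{b-a-1}\,\mathrm{d}t$ (\cite[equation 13.4.4]{olver}) and the second the form $M(a,b,z)=\frac{\Gamma(b)}{\Gamma(a)\Gamma(b-a)}\int_0^1\mathrm{e}^{zt}t^{a-1}(1-t)^{b-a-1}\,\mathrm{d}t$ (\cite[equation 13.4.1]{olver}); evaluating them gives $(\alpha|\mu|)^{r+m-j+1}$ times, respectively, $(m-j)!\,U(m-j+1,r+m-j+2,\alpha|\mu|-\beta\mu)$, $B(r+1,m-j+1)M(m-j+1,r+m-j+2,-\alpha|\mu|-\beta\mu)$, and $\mathrm{e}^{-\alpha|\mu|-\beta\mu}\Gamma(r+1)U(r+1,r+m-j+2,\alpha|\mu|+\beta\mu)$. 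The first two are already the third and second terms of the bracket in \eqref{halfv}; to the third I would apply Kummer's transformation $U(a,b,z)=z^{1-b}U(a-b+1,2-b,z)$ (\cite[equation 13.2.40]{olver}), which, after absorbing the resulting power of $z$, converts it into $\frac{\mathrm{e}^{-\alpha|\mu|-\beta\mu}\Gamma(r+1)}{(1+\beta/\alpha)^{r+m-j+1}}U(j-m,j-m-r,\alpha|\mu|+\beta\mu)$, the first term of that bracket (note $1+\beta/\alpha=1+\mathrm{sgn}(\mu)\beta/\alpha$ for $\mu>0$). Adding the three pieces yields \eqref{halfv} for $\mu>0$.

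For $\mu<0$ I would invoke symmetry: by \eqref{vgpdf}, and since $M$ depends on $\beta$ only through $\beta^2$, one has $-X\sim\mathrm{VG}(\nu,\alpha,-\beta,-\mu)$ with positive location parameter $-\mu$, so the case just proved applied to $-X$ gives, via $\mathbb{E}[|X|^r]=\mathbb{E}[|{-}X|^r]$, formula \eqref{halfv} with $(\mu,\beta)$ replaced by $(-\mu,-\beta)$; since the right-hand side of \eqref{halfv} is term-by-term invariant under this replacement, \eqref{halfv} holds for all $\mu\neq0$. When $\mu=0$ the three-way split degenerates — the middle interval vanishes and $J_j$ becomes $\int_0^\infty t^{r+m-j}\mathrm{e}^{-(1-\gamma)t}\,\mathrm{d}t+\int_0^\infty t^{r+m-j}\mathrm{e}^{-(1+\gamma)t}\,\mathrm{d}t$, an elementary sum of two gamma-function terms — which gives \eqref{halfmu0} (equivalently, one may let $\mu\to0^+$ in \eqref{halfv} using the small-$z$ behaviour of $U$). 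There are no analytic obstacles: the sum over $j$ is finite, and each integral converges at $t=0$ precisely when $r>-1$, which is the stated hypothesis. I expect the main difficulty to be purely organisational — matching each of the three integrals to the correct Euler representation, choosing the Kummer transformation that produces the $U$ with non-positive-integer first argument appearing in \eqref{halfv}, and carefully tracking the powers of $\alpha$ and $|\mu|$ introduced by the rescalings.
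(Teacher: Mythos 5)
Your argument is essentially the paper's own proof: the same reduction via the elementary finite-sum form of $K_{m+1/2}$, the same three-way split of the integral at $t=-u$ and $t=0$ producing exactly the three integrals appearing in (\ref{asw}), and the same identification of the pieces with $M$ and $U$; the only cosmetic differences are that you evaluate the integrals from the Euler representations in \cite[equations 13.4.1, 13.4.4]{olver} together with a Kummer transformation where the paper cites entries 3.383(1) and 3.383(4) of \cite{gradshetyn}, and that you dispose of $\mu<0$ by the reflection $-X\sim\mathrm{VG}(\nu,\alpha,-\beta,-\mu)$ rather than by repeating the computation. Your bookkeeping for the $\mu\neq0$ case (the rescalings by $u$, the Kummer step turning $U(r+1,r+m-j+2,\cdot)$ into $(1+\mathrm{sgn}(\mu)\beta/\alpha)^{-(r+m-j+1)}U(j-m,j-m-r,\cdot)$, and the prefactor identification) checks out.

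One point you should not gloss over: in the $\mu=0$ case your own computation gives $J_j=\Gamma(r+m-j+1)\big[(1-\gamma)^{-(r+m-j+1)}+(1+\gamma)^{-(r+m-j+1)}\big]$, i.e.\ the gamma factor is $\Gamma(r+m-j+1)$, not the $\Gamma(r+1)$ printed in (\ref{halfmu0}), so the assertion that this step ``gives (\ref{halfmu0})'' is not literally correct. Your value is the right one: it agrees with the $\mu\to0^+$ limit of (\ref{halfv}), since $U(j-m,j-m-r,0)=\Gamma(r+m-j+1)/\Gamma(r+1)$ and the $U(m-j+1,r+m-j+2,\alpha|\mu|-\beta\mu)$ term contributes $\Gamma(r+m-j+1)/(1-\beta/\alpha)^{r+m-j+1}$ in that limit; and a direct check at $m=1$, $\beta=0$, $r=1$ gives $\mathbb{E}[|X|]=3/(2\alpha)$ from (\ref{hamq}), which matches $\Gamma(r+m-j+1)$ but not $\Gamma(r+1)$ (the two expressions coincide only when $m=0$). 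So (\ref{halfmu0}) as stated appears to contain a typo for $m\geq1$, and your derivation, carried through carefully, proves the corrected version rather than the printed one; you should flag the discrepancy rather than claim agreement.
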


\begin{proof} Suppose first that $\mu\not=0$. Arguing similarly to the proof of Theorem \ref{thm1} and using the elementary representation of the modified Bessel function $K_\nu(x)$ (see \cite[equations 10.49.9 and 10.49.12]{olver}) (since $\nu=m+1/2$ for $m=0,1,2,\ldots$), we obtain that, for $\gamma=\beta/\alpha$ and $u=\alpha\mu>0$,
\begin{align}
 \mathbb{E}[|X|^r]
 &=\sqrt{\frac{\pi}{2}}\frac{M}{\alpha^{r+m+3/2}}\sum_{j=0}^{m}\frac{(m+j)!}{(m-j)!j!2^j}\bigg[\int_0^\infty(t+u)^rt^{m-j}\mathrm{e}^{-(1-\gamma) t} \,\mathrm{d}t\nonumber\\
 &\quad+\int_0^u(u-t)^rt^{m-j}\mathrm{e}^{-(1+\gamma) t} \,\mathrm{d}t+\int_u^\infty(t-u)^rt^{m-j}\mathrm{e}^{-(1+\gamma) t} \,\mathrm{d}t\bigg].\label{asw}
\end{align}
We evaluate the first integral using the integral representation of $U(a,b,x)$ (see \cite[13.4.4]{olver}); the second integral using equation 3.383(1) of \cite{gradshetyn}; and the third integral using equation 3.383(4) of \cite{gradshetyn} expressed in terms of the confluent hypergeometric function $U(a,b,x)$. This yields (\ref{halfv}) for $\mu>0$, and the case $\mu<0$ is similar. To obtain (\ref{halfmu0}) we set $\mu=0$ in (\ref{asw}) and evaluate the two remaining integrals using the integral representation of $U(a,b,x)$ (see \cite[13.4.4]{olver}).
\end{proof}


\begin{remark}\label{rem9} 1. The infinite series in formula (\ref{formula1}) is absolutely convergent. Indeed,
\begin{align*}
&\sum_{j=0}^\infty\frac{(-2\mathrm{sgn}(\mu)\beta/\alpha)^j}{j!}\Gamma\Big(\frac{r+j-k+1}{2}\Big)\Gamma\Big(\nu+\frac{r+j-k+1}{2}\Big)G_{\nu+r+j-k,\nu}(\alpha|\mu|) \nonumber \\
&\quad< \sum_{j=0}^\infty\frac{(2|\beta|/\alpha)^j}{j!}\Gamma\Big(\frac{r+j-k+1}{2}\Big)\Gamma\Big(\nu+\frac{r+j-k+1}{2}\Big)\\
&\quad=\Gamma\Big(\frac{r-k+1}{2}\Big)\Gamma\Big(\nu+\frac{r-k+1}{2}\Big)\,{}_2F_1\bigg(\frac{r-k+1}{2},\nu+\frac{r-k+1}{2};\frac{1}{2};\frac{\beta^2}{\alpha^2}\bigg)\nonumber\\
&\quad\quad+\frac{2|\beta|}{\alpha}\Gamma\Big(\frac{r-k}{2}+1\Big)\Gamma\Big(\nu+\frac{r-k}{2}+1\Big)\,{}_2F_1\bigg(\frac{r-k}{2}+1,\nu+\frac{r-k}{2}+1;\frac{3}{2};\frac{\beta^2}{\alpha^2}\bigg),
\end{align*}
where we used that $0<G_{\mu,\nu}(x)<1$ to get the inequality.
This bound provides insight into the convergence rate, implying geometric convergence, which is faster the smaller the ratio $\beta/\alpha$ is. 

\vspace{2mm}

\noindent 2. The infinite series in formula (\ref{formula1}) also converges faster if the product $\alpha\mu$ is small. This can be seen since, as $\alpha\mu\rightarrow0$,
\begin{align}\label{limg}
\bigg(\frac{\alpha|\mu|}{2}\bigg)^k G_{\nu+r+j-k,\nu}(\alpha|\mu|)\sim \frac{2^{1-\nu-r-j}J_{\nu,r,j,k}(\alpha|\mu|)}{\Gamma((r+j-k+1)/2)\Gamma(\nu+(r+j-k+1)/2)},    
\end{align}
where
\begin{equation*}
J_{\nu,r,j,k}(\alpha|\mu|)=\begin{cases}\displaystyle \frac{2^{\nu-1}\Gamma(\nu)}{r-k+j+1}(\alpha|\mu|)^{r+j+1}, &   \: \nu> 0, \\[10pt]
\displaystyle-\frac{1}{r-k+j+1}(\alpha|\mu|)^{r+j+1}\log(\alpha|\mu|), & \:  \nu = 0, \\[10pt]
\displaystyle\frac{2^{-\nu-1}\Gamma(-\nu)}{2\nu+r-k+j+1}(\alpha|\mu|)^{2\nu+r+j+1}, & \: \nu\in(-\tfrac{1}{2},0). \end{cases} 
\end{equation*}
The limiting form (\ref{limg}) is obtained from the integral representation (\ref{370}) of $G_{\nu+r+j-k,\nu}(x)$ and then performing a simple asymptotic analysis of the integral using the limiting form 
\begin{equation}\label{Ktend0}K_{\nu} (x) = \begin{cases} 2^{\nu -1} \Gamma (\nu) x^{-\nu}+o(x^{-\nu}), & \quad x \downarrow 0, \: \nu > 0, \\
-\log x+O(1), & \quad x \downarrow 0, \: \nu = 0, \\
2^{-\nu -1} \Gamma (-\nu) x^{\nu}+O(x^{-\nu}), & \quad x \downarrow 0, \: \nu\in(-\tfrac{1}{2},0).
\end{cases} 
\end{equation}
(see \cite[Chapter 10]{olver}). One can get further insight from the following asymptotic approximation.
\end{remark}

\begin{proposition}\label{prop1} Let $X\sim \mathrm{VG}(\nu,\alpha,\beta,\mu)$, where $\nu > -1/2$, $0\leq|\beta|<\alpha$, $\mu\in\mathbb{R}$. Let $r\geq1$ be an odd integer. Then, as $\alpha\mu\rightarrow0$,
\begin{align*}
\mathbb{E}[|X|^r]&=  \frac{2^r(1-\beta^2/\alpha^2)^{\nu+1/2}}{\sqrt{\pi}\alpha^{r}\Gamma(\nu+1/2)}\bigg\{\sum_{k=0}^{(r-1)/2}\bigg(\frac{\alpha\mu}{2}\bigg)^{2k}\Gamma\Big(\frac{r-2k+1}{2}\Big)\Gamma\Big(\nu+\frac{r-2k+1}{2}\Big)\\
&\quad\times\bigg[\binom{r}{2k}{}_2F_1\bigg(\frac{r-2k+1}{2},\nu+\frac{r-2k+1}{2};\frac{1}{2};\frac{\beta^2}{\alpha^2}\bigg)\\
&\quad+\frac{\beta}{\alpha}(\alpha\mu)\binom{r}{2k+1}{}_2F_1\bigg(\frac{r-2k+1}{2},\nu+\frac{r-2k+1}{2};\frac{3}{2};\frac{\beta^2}{\alpha^2}\bigg)\bigg] +R_{\nu,\alpha,\beta,r}(\alpha|\mu|)\bigg\},
\end{align*}    
where, for $u=\alpha|\mu|$,
\begin{align*}
R_{\nu,\alpha,\beta,r}(u)=\begin{cases}\displaystyle (r+1)^{-1}\Gamma(\nu) u^{r+1} +o(u^{r+1}), &   \: \nu> 0, \\[1pt]
\displaystyle-2(r+1)^{-1}u^{r+1}\log(u) +O(u^{r+1}), & \:  \nu = 0, \\[1pt]
\displaystyle 2^{-2\nu}\Gamma(-\nu)B(r+1,2\nu+1) u^{r+1+2\nu}+O(u^{r+1}), & \: \nu\in(-\tfrac{1}{2},0). \end{cases}     
\end{align*}
\end{proposition}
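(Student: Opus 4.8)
The plan is to start from the exact formula (\ref{formula1}) of Theorem \ref{thm1} and extract its behaviour as $\alpha\mu\to0$. First, rewriting $\beta\mu$ as $(\beta/\alpha)(\alpha\mu)$, one sees that the first double sum in (\ref{formula1})---the part assembled from ${}_2F_1$ functions---is already identical to the ``regular'' part displayed inside the braces of Proposition \ref{prop1}. Hence the whole statement reduces to showing that the remaining series in (\ref{formula1}), the one with summand proportional to $\binom{r}{k}(-1)^k\big(\tfrac{\alpha|\mu|}{2}\big)^{k}\Gamma\big(\tfrac{r+j-k+1}{2}\big)\Gamma\big(\nu+\tfrac{r+j-k+1}{2}\big)G_{\nu+r+j-k,\nu}(\alpha|\mu|)$, contributes exactly $R_{\nu,\alpha,\beta,r}(\alpha|\mu|)$ to the braced expression (i.e., it equals $-R_{\nu,\alpha,\beta,r}(\alpha|\mu|)$, since it enters (\ref{formula1}) with a minus sign), as $u:=\alpha|\mu|\to0$. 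So the task is a small-$u$ analysis of that series.

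The tool I would use is the limiting form (\ref{limg}) from part 2 of Remark \ref{rem9}, obtained from the integral representation (\ref{370}) of $G_{\nu+r+j-k,\nu}$ together with the small-argument expansions (\ref{Ktend0}) of $K_\nu$. For each fixed $j$ and $k$ it gives, once the two $\Gamma$-factors are cancelled, that $\Gamma\big(\tfrac{r+j-k+1}{2}\big)\Gamma\big(\nu+\tfrac{r+j-k+1}{2}\big)\big(\tfrac{u}{2}\big)^{k}G_{\nu+r+j-k,\nu}(u)\sim 2^{1-\nu-r-j}J_{\nu,r,j,k}(u)$, which is a constant multiple of $u^{r+j+1}$ when $\nu>0$, of $u^{r+j+1}\log u$ when $\nu=0$, and of $u^{2\nu+r+j+1}$ when $\nu\in(-\tfrac{1}{2},0)$. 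Since the power of $u$ strictly increases with $j$, the leading order comes from the $j=0$ terms; summing these over $k$ leaves the finite sum $\sum_{k=0}^r\binom{r}{k}\frac{(-1)^k}{a-k}$, where $a=r+1$ in the cases $\nu\ge0$ and $a=2\nu+r+1$ in the case $\nu\in(-\tfrac{1}{2},0)$. Writing $\frac{1}{a-k}=\int_0^1 t^{a-1-k}\,\mathrm{d}t$ and using that $r$ is odd, this sum equals $-\int_0^1 t^{a-r-1}(1-t)^r\,\mathrm{d}t=-B(a-r,r+1)$, which is $-1/(r+1)$ for $a=r+1$ and $-B(2\nu+1,r+1)$ for $a=2\nu+r+1$; assembling these with the explicit constants in $J_{\nu,r,j,k}$ and the overall sign in (\ref{formula1}) produces the three leading coefficients of $R_{\nu,\alpha,\beta,r}$ recorded in the statement.

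The hard part will be promoting these term-by-term asymptotics to a statement about the whole series: interchanging $\lim_{\alpha\mu\to0}$ with $\sum_{j\ge0}$ (the sum over $k$ being finite and harmless) and verifying that everything beyond the computed leading term is genuinely of lower order. I would handle the interchange by dominated convergence: the bound $0<G_{\mu,\nu}(x)<1$ provides, exactly as in part 1 of Remark \ref{rem9}, a $j$-summable majorant that is uniform for $u$ in a bounded neighbourhood of $0$; sharpening this with the elementary estimate $G_{\nu+s,\nu}(u)=O(u^{s+1})$ for $u\le1$ (and $O(u^{s+1}|\log u|)$, resp.\ $O(u^{2\nu+s+1})$, when $\nu=0$, resp.\ $\nu\in(-\tfrac{1}{2},0)$), read off from (\ref{370}) and (\ref{Ktend0}), shows that the $j\ge1$ part of the series is of strictly smaller order in $u$ than the displayed leading term. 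It then remains to check, again via (\ref{370}) and the subleading terms of (\ref{Ktend0}), that within the $j=0$ contribution the next-order term is $o(u^{r+1})$ when $\nu>0$ and $O(u^{r+1})$ when $\nu=0$ or $\nu\in(-\tfrac{1}{2},0)$---in these last two cases the leading term (respectively $u^{r+1}\log u$ and $u^{r+1+2\nu}$) already dominates $u^{r+1}$---which gives precisely the error terms in the statement.
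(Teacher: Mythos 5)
Your proposal is correct and rests on the same core idea as the paper's proof: isolate the contribution of the integrals over $(0,u)$, apply the small-argument limiting form (\ref{Ktend0}) of $K_\nu$, and observe that the $j=0$ term of the series dominates as $u=\alpha|\mu|\to0$. The difference is where in the derivation you perform the asymptotic analysis. The paper backs up to the stage of the proof of Theorem \ref{thm1} at which the remainder is still the un-expanded series $\tilde{R}_{\nu,\gamma,r}(u)=2M\alpha^{-\nu-r-1}\sum_{j\geq0}((-\gamma)^j/j!)\int_0^u(u-t)^rt^{\nu+j}K_\nu(t)\,\mathrm{d}t$ and applies Lemma \ref{lem8}, so the Beta function drops out directly from $\int_0^1x^{j}(1-x)^r\,\mathrm{d}x$ after one change of variables. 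You instead start from the fully expanded formula (\ref{formula1}) together with (\ref{limg}), and must therefore undo the binomial expansion: your identity $\sum_{k=0}^r\binom{r}{k}(-1)^k/(a-k)=-B(a-r,r+1)$ (valid since $r$ is odd and $a-r>0$ in all three cases) is exactly this resummation, and it yields the same leading constants as the paper's route. The trade-off is that your path needs this extra combinatorial step, while in return you are more explicit than the paper about the point it leaves implicit, namely justifying the interchange of $u\to0$ with the sum over $j$ and showing the $j\geq1$ tail is of strictly smaller order; your proposed uniform-in-$j$ bounds on $G_{\nu+r+j-k,\nu}(u)$ obtained from (\ref{370}) and (\ref{Ktend0}) do suffice for this. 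One caveat on your final sentence of the second paragraph: carrying through the constants in $J_{\nu,r,j,k}$ (or, equivalently, in Lemma \ref{lem8}) gives leading coefficients equal to $2^{-r}$ times those displayed for $R_{\nu,\alpha,\beta,r}$ in Proposition \ref{prop1} (e.g.\ $2^{-r}(r+1)^{-1}\Gamma(\nu)u^{r+1}$ for $\nu>0$); since the paper's own proof produces the same factor, this points to a normalisation slip in the stated form of $R_{\nu,\alpha,\beta,r}$ rather than to any error in your argument, but you should not claim exact agreement with the displayed coefficients without flagging it.
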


\begin{remark} The parameter regimes $|\beta|/\alpha\ll 1$ and $\alpha|\mu|\ll 1$ are often encountered when using the VG distribution to model log returns of financial assets. For instance, in Example 1 of \cite{s04} concerning readings from Standard and Poor's 500 Index we have $\nu=1.870$, $\alpha=271.1$, $\beta=-2.342$ and $\mu=2.585\times10^{-4}$ so that $\beta/\alpha=-8.64\times 10^{-3}$ and $\alpha\mu=7.01\times 10^{-2}$. Recalling Remark \ref{rem1}, for computation of central absolute moments we would take $\mu=-(2\nu+1)\beta/(\alpha^2-\beta^2)$, in which case $\alpha\mu=4.09\times 10^{-2}$.

In such examples, truncating the infinite series in formula (\ref{formula1}) and retaining just the first few terms can result in excellent approximations for absolute raw and absolute central moments of the VG distribution. In fact, even an application of the simple asymptotic approximation given in Proposition \ref{prop1} can yield excellent approximations in such parameter regimes. For example, using \emph{Mathematica}, we applied the asymptotic approximation from Proposition \ref{prop1} to estimate the absolute raw first moments of a VG distribution with the parameter values given in Example 1 of \cite{s04}, and obtained relative error of just $6.60\times10^{-4}$, whilst for the approximation of the absolute central first moment the relative error was $1.09\times10^{-3}$.  As can be seen from the term $R_{\nu,\alpha,\beta,\mu}(\alpha|\mu|)$, the order of the error in the asymptotic approximation decreases as the order $r$ of the moments increases, and for the absolute raw and central third moments the relative errors had an order of magnitude no larger than $10^{-6}$, which was the same order as the rounding error in the numerical integration we used for calculation of the absolute moments. 
\end{remark}



\begin{lemma}\label{lem8}
Let $\nu>-1/2$, $r\geq1$, $j\geq0$ and $u>0$. Let $I_{\nu,r,j}(u)=\int_0^u(u-t)^rt^{\nu+j}K_\nu(t)\,\mathrm{d}t$. Then, as $u\rightarrow0$,   
\begin{align*}
I_{\nu,r,j}(u)=\begin{cases} 2^{\nu-1}\Gamma(\nu)B(r+1,j+1) u^{r+j+1} +o(u^{r+j+1}), &   \: \nu> 0, \\
-B(r+1,j+1)u^{r+j+1}\log(u) +O(u^{r+j+1}), & \:  \nu = 0, \\
 2^{-\nu-1}\Gamma(-\nu)B(r+1,2\nu+j+1) u^{r+j+1+2\nu}+O(u^{r+j+1}), & \: \nu\in(-\tfrac{1}{2},0). \end{cases}    
\end{align*}
\end{lemma}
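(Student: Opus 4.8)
The plan is to substitute the small-argument behaviour of $K_\nu$ recorded in (\ref{Ktend0}) into $I_{\nu,r,j}(u)$ and integrate against $(u-t)^r$ term by term, the only nontrivial quadrature being the beta integral
\[
\int_0^u (u-t)^r t^s\,\mathrm{d}t = B(r+1,s+1)\,u^{r+s+1}, \qquad r>-1,\ s>-1,
\]
which follows from the change of variables $t=us$. Since (\ref{Ktend0}) has a different shape in each of the three regimes $\nu>0$, $\nu=0$, $\nu\in(-\tfrac12,0)$, I would treat them separately.

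For $\nu>0$, write $t^{\nu+j}K_\nu(t)=2^{\nu-1}\Gamma(\nu)t^{j}+g(t)$, where $g(t)=t^{\nu+j}\bigl(K_\nu(t)-2^{\nu-1}\Gamma(\nu)t^{-\nu}\bigr)=o(t^{j})$ as $t\darr 0$ by (\ref{Ktend0}). The beta integral with $s=j$ handles the main term, giving $2^{\nu-1}\Gamma(\nu)B(r+1,j+1)u^{r+j+1}$, and for the remainder I would use a uniform $o$-estimate: given $\eps>0$ pick $\delta>0$ with $|g(t)|\le\eps t^{j}$ on $(0,\delta]$, so that $\bigl|\int_0^u(u-t)^rg(t)\,\mathrm{d}t\bigr|\le\eps B(r+1,j+1)u^{r+j+1}$ for $u\le\delta$; hence this contribution is $o(u^{r+j+1})$, as claimed. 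The regime $\nu\in(-\tfrac12,0)$ is entirely analogous: now (\ref{Ktend0}) gives $t^{\nu+j}K_\nu(t)=2^{-\nu-1}\Gamma(-\nu)t^{2\nu+j}+O(t^{j})$, the beta integral applies with $s=2\nu+j$ --- legitimate because $2\nu+j>-1$ thanks to $\nu>-\tfrac12$ and $j\ge0$ --- the $O(t^j)$ remainder integrates to $O(u^{r+j+1})$, and since $2\nu<0$ the term $u^{r+j+1+2\nu}$ dominates, producing the stated leading order.

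The case $\nu=0$ requires marginally more care because of the logarithm. Using $K_0(t)=-\log t+O(1)$, write $t^{j}K_0(t)=-t^{j}\log t+h(t)$ with $h(t)=O(t^{j})$. The substitution $t=us$ turns $\int_0^u (u-t)^r(-t^{j}\log t)\,\mathrm{d}t$ into
\[
-u^{r+j+1}\Bigl(\log u\cdot B(r+1,j+1)+\int_0^1(1-s)^r s^{j}\log s\,\mathrm{d}s\Bigr),
\]
and since the last integral is a finite constant independent of $u$, this term equals $-B(r+1,j+1)u^{r+j+1}\log u+O(u^{r+j+1})$; the $h$-contribution is $O(u^{r+j+1})$ by the same uniform bound as above, yielding the middle line of the claimed asymptotics. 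No step here presents a genuine obstacle; the only points that need attention are making the remainder estimates uniform on $(0,u]$ so that they may be integrated against $(u-t)^r$, and checking the integrability condition $2\nu+j>-1$ in the regime $\nu\in(-\tfrac12,0)$.
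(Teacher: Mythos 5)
Your proposal is correct and follows essentially the same route as the paper's proof: substitute the limiting form (\ref{Ktend0}) into the integrand, change variables $t=us$, and evaluate the leading term via the beta integral $B(a,b)=\int_0^1 x^{a-1}(1-x)^{b-1}\,\mathrm{d}x$. Your additional care with the uniform remainder estimates and the integrability condition $2\nu+j>-1$ simply fills in details the paper leaves implicit.
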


\begin{proof}
Apply the limiting form (\ref{Ktend0}) to the integrand of the integral $I_{\nu,r,j}(u)$ and then, following a change of variables, evaluate (the leading order term of) the resulting integral using the standard formula $B(a,b)=\int_0^1 x^{a-1}(1-x)^{b-1}\,\mathrm{d}x$.   
\end{proof}

\noindent \emph{Proof of Proposition \ref{prop1}.} 
As usual, we suppose that $u=\alpha\mu>0$; the case $u<0$ is similar and is omitted. On examining the proof of Theorem \ref{thm1}, we see that
\begin{align*}
\mathbb{E}[|X|^r] &=  \frac{2^r(1-\beta^2/\alpha^2)^{\nu+1/2}}{\sqrt{\pi}\alpha^{r}\Gamma(\nu+1/2)}\sum_{k=0}^{(r-1)/2}\bigg(\frac{u}{2}\bigg)^{2k}\Gamma\Big(\frac{r-2k+1}{2}\Big)\Gamma\Big(\nu+\frac{r-2k+1}{2}\Big)\\
&\quad\times\bigg[\binom{r}{2k}{}_2F_1\bigg(\frac{r-2k+1}{2},\nu+\frac{r-2k+1}{2};\frac{1}{2};\gamma^2\bigg)\\
&\quad+\gamma u\binom{r}{2k+1}{}_2F_1\bigg(\frac{r-2k+1}{2},\nu+\frac{r-2k+1}{2};\frac{3}{2};\gamma^2\bigg)\bigg] +\tilde{R}_{\nu,\gamma,r}(u),
\end{align*}
where $\gamma=\beta/\alpha$ and
$
\tilde{R}_{\nu,\gamma,r}(u)=2M\alpha^{-\nu-r-1}\sum_{j=0}^\infty((-\gamma)^j/j!)\int_0^u(u-t)^rt^{\nu+j}K_\nu(t)\,\mathrm{d}t.    
$
Applying Lemma \ref{lem8} to $\tilde{R}_{\nu,\gamma,r}(u)$ as $u\rightarrow0$ completes the proof.
 \qed

\section{Special cases}\label{sec3}


1. As noted by \cite{kkp01}, the $\mathrm{VG}(1/2,\alpha,\beta,\mu)$ distribution corresponds to the asymmetric Laplace distribution, denoted by $\mathrm{AL}(\alpha,\beta,\mu)$, with PDF $p(x)=(2\alpha)^{-1}(\alpha^2-\beta^2)\mathrm{e}^{\beta (x-\mu)-\alpha|x-\mu|}$, $x\in\mathbb{R}$.
Here we specialise formula (\ref{halfv}) to the case $m=0$ to deduce formulas for the absolute moments of the asymmetric Laplace distribution.

Setting $m=0$ in (\ref{halfv}) and using that $U(0,b,x)=1$ (see \cite[equation 13.6.3]{olver}) gives the following formula for the absolute raw moments of the asymmetric Laplace distribution. Let $X\sim\mathrm{AL}(\alpha,\beta,\mu)$. Then, for $r>-1$,
\begin{align}
\mathbb{E}[|X|^r]&=\frac{\alpha^2-\beta^2}{2\alpha^{r+2}}\bigg[\frac{\mathrm{e}^{-\alpha|\mu|-\beta\mu}\Gamma(r+1)}{(1+\mathrm{sgn}(\mu)\beta/\alpha)^{r+1}}+\frac{(\alpha|\mu|)^{r+1}}{r+1}M(1,r+2,-\alpha|\mu|-\beta\mu)\nonumber\\
&\quad+(\alpha|\mu|)^{r+1}U(1,r+2,\alpha|\mu|-\beta\mu)\bigg], \quad \mu\not=0,  \label{halfv2}  \\
\mathbb{E}[|X|^r]&=\frac{\alpha^2-\beta^2}{2\alpha}\Gamma(r+1)\bigg[\frac{1}{(\alpha+\beta)^{r+1}}+\frac{1}{(\alpha-\beta)^{r+1}}\bigg], \quad \mu=0. \label{agg}
\end{align}
Formula (\ref{halfv2}) appears to be new. Formula (\ref{agg}) is in agreement with formula (3.1.26) of \cite{kkp01} for the absolute moments of general order for the asymmetric Laplace distribution with zero location parameter (they equivalently stated their formula as one for the absolute moments about the location parameter). Using that $M(1,b,x)=(b-1)\mathrm{e}^x\gamma(b-1,x)$ (for $b\not=1$) and $U(1,b,x)=\mathrm{e}^{x}x^{1-b}\Gamma(b-1,x)$ (see \cite[equations 13.6.5 and 13.6.6]{olver}), we can express the confluent hypergeometric functions in (\ref{halfv2}) in terms of the lower and upper incomplete gamma functions $\gamma(a,x)=\int_0^x t^{a-1}\mathrm{e}^{-t}\,\mathrm{d}t$ ($a>0$) and $\Gamma(a,x)=\int_x^\infty t^{a-1}\mathrm{e}^{-t}\,\mathrm{d}t$ ($a\in\mathbb{R}$). For $\mu\not=0$ and $r>-1$,
\begin{align}
\mathbb{E}[|X|^r]&=\frac{\alpha^2-\beta^2}{2\alpha}\bigg[\frac{\mathrm{e}^{\alpha|\mu|-\beta\mu}}{(\alpha-\mathrm{sgn}(\mu)\beta)^{r+1}}\Gamma(r+1,\alpha|\mu|-\beta\mu)\nonumber\\
&\quad+\frac{\mathrm{e}^{-\alpha|\mu|-\beta\mu}}{(\alpha+\mathrm{sgn}(\mu)\beta)^{r+1}}\Big(\Gamma(r+1)+(-1)^{-r-1}\gamma(r+1,-\alpha|\mu|-\beta\mu)\Big)\bigg]. \label{zero}
\end{align}
Further setting $r=1$ in (\ref{agg}) and (\ref{zero}) gives the following formula for the absolute first moment of the asymmetric Laplace distribution:
\begin{equation}\label{eq1}
\mathbb{E}[|X|]=\begin{cases}\displaystyle |\mu|+\frac{2\mathrm{sgn}(\mu)\beta}{\alpha^2-\beta^2}+\frac{\alpha-\mathrm{sgn}(\mu)\beta}{\alpha(\alpha+\mathrm{sgn}(\mu)\beta)}\mathrm{e}^{-\alpha|\mu|-\beta\mu}, &    \mu\not=0, \\[10pt]
\displaystyle\frac{1}{2\alpha}\bigg[\frac{\alpha-\beta}{(\alpha+\beta)^2}+\frac{\alpha+\beta}{(\alpha-\beta)^2}\bigg] , & \mu=0. \end{cases}  
\end{equation}
Following Remark \ref{rem1}, on setting $\mu=-2\beta/(\alpha^2-\beta^2)$ in the above formulas (\ref{halfv2}), (\ref{zero}) and (\ref{eq1}) we deduce formulas for the absolute central moments of the $\mathrm{AL}(\alpha,\beta,\mu)$ distribution. In particular, setting  $\mu=-2\beta/(\alpha^2-\beta^2)$ in (\ref{eq1}) 
yields the following formula for the mean deviation of the $\mathrm{AL}(\alpha,\beta,\mu)$ distribution:
\begin{equation}\label{mad}
\mathbb{E}[|X-\mathbb{E}[X]|]=\frac{\alpha+|\beta|}{\alpha(\alpha-|\beta|)}\exp\bigg\{-\frac{2|\beta|}{\alpha+|\beta|}\bigg\}.  
\end{equation}
This formula corrects the erroneous formula (3.1.27) of \cite{kkp01}. Here we will state the corrected formula in their parameterisation, for which $\alpha=(\sqrt{2}\sigma)^{-1}(\kappa^{-1}+\kappa)$, $\beta=(\sqrt{2}\sigma)^{-1}(\kappa^{-1}-\kappa)$,   
where $\sigma>0$ and $\kappa>0$.
The corrected formula is:
\begin{equation*}
\mathbb{E}[|X-\mathbb{E}[X]|]=\begin{cases}\displaystyle \frac{\sqrt{2}\sigma}{\kappa(1+\kappa^2)}\mathrm{e}^{\kappa^2-1}, &    \kappa\in(0,1], \\[10pt]
\displaystyle \frac{\sqrt{2}\sigma}{\kappa^{-1}(1+\kappa^{-2})}\mathrm{e}^{\kappa^{-2}-1}, & \kappa>1. \end{cases} 
\end{equation*}
We also correct the subsequent erroneous formula of \cite[Section 3.1.5]{kkp01} for the ratio of the mean deviation and the standard deviation:
\begin{align*}
\frac{\text{Mean deviation}}{\text{Standard deviation}}=\begin{cases}\displaystyle \frac{2}{(1+\kappa^2)\sqrt{1+\kappa^4}}\mathrm{e}^{\kappa^2-1}, &    \kappa\in(0,1], \\[10pt]
\displaystyle \frac{2}{(1+\kappa^{-2})\sqrt{1+\kappa^{-4}}}\mathrm{e}^{\kappa^{-2}-1}, & \kappa>1, \end{cases}  
\end{align*}
where we used that $\mathrm{Var}(X)=\sigma^2(1+\kappa^4)/(2\kappa^2)$ in the parameterisation of \cite{kkp01}.

We also remark that setting $m=n-1$ and replacing $\mu$ by $n\mu$ in formula (\ref{halfv}) and setting $m=n-1$ in formula (\ref{halfmu0}) gives formulas for the absolute raw moments of the sum $\sum_{i=1}^nX_i$, where $X_1,\ldots,X_n$ are independent $\mathrm{AL}(\alpha,\beta,\mu)$ random variables for the cases $\mu\not=0$ and $\mu=0$, respectively. This follows from the fact that if $Y_1,\ldots,Y_n$ are independent $\mathrm{VG}(\nu,\alpha,\beta,\mu)$ random variables then $\sum_{i=1}^nY_i\sim\mathrm{VG}(n\nu+(n-1)/2,\alpha,\beta,n\mu)$ (which follows from a reparameterisation and simple induction on relation (15) of \cite{bibby}). Therefore if  $X_1,\ldots,X_n$ are independent $\mathrm{AL}(\alpha,\beta,\mu)$ random variables, then $\sum_{i=1}^nX_i\sim\mathrm{VG}(n-1/2,\alpha,\beta,n\mu)$.


\vspace{2mm}

\noindent 2. Let $(U,V)$ be a bivariate normal random vector with  zero mean vector, variances $(\sigma_U^2,\sigma_V^2)$ and correlation coefficient $\rho$. Let $Z=UV$ denote the product of these correlated normal random variables, and write $s=\sigma_U\sigma_V$.  We also consider the mean $\overline{Z}_n=n^{-1}\sum_{i=1}^nZ_i$, where $Z_1,\ldots,Z_n$ are independent copies of $Z$. It was shown by \cite{gaunt vg} that $Z$ is VG distributed and later by \cite{gaunt prod} that 
\begin{equation}\label{vgrep}\overline{Z}_n\sim\mathrm{VG}\bigg(\frac{n-1}{2},\frac{n}{s(1-\rho^2)},\frac{n\rho}{s(1-\rho^2)},0\bigg).
\end{equation}

 Formulas for the absolute central moments of $\overline{Z}_n$ can be immediately obtained by combining (\ref{vgrep}) with (\ref{formula1}) and (\ref{halfv}).
 Formulas for the absolute central moments of $Z$ follow on setting $n=1$. 
We remark that very general and rather complicated formulas for various moments (which includes absolute central moments) of the multivariate normal distribution have recently been obtained by \cite{o21}. However, applying formula (\ref{formula1}) yields a much simpler formula, which is
more convenient to use for computation of absolute central comments of the product $Z$.

\appendix

\section*{Acknowledgements}
The author is funded in part by EPSRC grant EP/Y008650/1. I would like to thank the referee for their constructive comments.

\footnotesize


\begin{thebibliography}{99}
\addcontentsline{toc}{section}{References}

\bibitem{aet21} Azmoodeh, E., Eichelsbacher, P. and Th\"{a}le, C. Optimal Variance-Gamma approximation on the second Wiener chaos. \emph{J. Funct. Anal.} $\mathbf{282}$ (2022), Art.\ 109450.

\bibitem{barn1} Barndorff-Nielsen, O. E. and Stelzer, R. Absolute Moments of Generalized Hyperbolic Distributions and Approximate Scaling of Normal Inverse Gaussian L\'{e}vy Processes. \emph{Scand. J. Stat.} $\mathbf{32}$ (2005),  617--637.

\bibitem{bibby} Bibby, B. M. and S{\o}rensen, M.  Hyperbolic Processes in Finance.  In: Rachev, S. (Ed.), \emph{Handbook of Heavy Tailed Distributions in Finance.} Elsevier Science, Amsterdam (2003),  211--248. 

\bibitem{craig} Craig, C. C. On the Frequency Function of $xy$. \emph{Ann. Math. Stat.} $\mathbf{7}$ (1936), 1--15.

\bibitem{vgsurvey} Fischer, A., Gaunt, R. E. and Sarantsev, A. The Variance-Gamma Distribution: A Review. To appear in \emph{Stat. Sci.}, 2024+.


\bibitem{gaunt vg} Gaunt, R. E.  Variance-Gamma approximation via Stein's method.  \emph{Electron. J. Probab.} $\mathbf{19}$ no. 38 (2014).

\bibitem{gaunt prod} Gaunt, R. E. A note on the distribution of the product of zero mean correlated normal random variables. \emph{Stat. Neerl.} $\mathbf{73}$ (2019),  176--179.

\bibitem{gaunt lommel} Gaunt, R. E.  Bounds for modified Lommel functions of the first kind and their ratios. \emph{J. Math. Anal. Appl.} $\mathbf{486}$ (2020), Art.\ 123893.

\bibitem{gaunt22} Gaunt, R. E. The basic distributional theory for the product of zero mean correlated normal random variables. \emph{Stat. Neerl.} $\mathbf{76}$ (2022), 450--470.

\bibitem{gaunt23} Gaunt, R. E. On the moments of the variance-gamma distribution. \emph{Stat. Probabil. Lett.} $\mathbf{201}$ (2023), Art.\ 109884.

\bibitem{gaunt24} Gaunt, R. E. On the cumulative distribution function of the variance-gamma distribution. To appear in \emph{B. Aust. Math. Soc.}, 2024+.

\bibitem{gradshetyn} Gradshetyn, I. S. and Ryzhik, I. M.  \emph{Table of Integrals, Series and Products,}  $7$th ed.  Academic Press, 2007.

\bibitem{ha04} Holm, H. and Alouini, M.--S. Sum and Difference of two squared correlated Nakagami variates with the McKay distribution. \emph{IEEE T. Commun.} $\mathbf{52}$ (2004),  1367--1376.

\bibitem{ar1} Johannesson, P., Podg\'orski, K., Rychlik, I. and Shariati, N. AR(1) time series with autoregressive gamma variance for road topography modeling. \emph{Probabilist. Eng. Mech.} $\mathbf{43}$ (2016), 106--116.

\bibitem{kkp01} Kotz, S., Kozubowski, T. J. and Podg\'{o}rski, K. \emph{The Laplace Distribution and Generalizations: A Revisit with New Applications.} Springer, 2001. 

\bibitem{mcc98} Madan, D. B., Carr, P. and Chang, E. C.  The variance gamma process and option pricing. \emph{Eur. Finance Rev.} $\mathbf{2}$ (1998),  74--105.

\bibitem{madan} Madan, D. B. and Seneta, E. The Variance Gamma (V.G.) Model for Share Market Returns.  \emph{J. Bus.} $\mathbf{63}$ (1990),  511--524.


\bibitem{np16} Nadarajah, S. and Pog\'{a}ny, T. K. On the distribution of the product of correlated normal random variables. \emph{C.R. Acad. Sci. Paris, Ser. I} $\mathbf{354}$ (2016),  201--204.

\bibitem{o21} Ogasawara, H. A non-recursive formula for various moments of the multivariate normal distribution with sectional truncation. \emph{J. Multivariate Anal.} $\mathbf{183}$ (2021), Art.\ 104792.

\bibitem{olver} Olver, F. W. J., Lozier, D. W., Boisvert, R. F. and Clark, C. W.  \emph{NIST Handbook of Mathematical Functions.} Cambridge University Press, 2010.


\bibitem{scott} Scott, D. J., W\"urtz, D., Dong, C. and Tran, T. T.  Moments of the generalized hyperbolic distribution. \emph{Computation. Stat.} $\mathbf{26}$ (2011),  459--476.

\bibitem{s04} Seneta, E.  Fitting the Variance-Gamma Model to Financial Data. \emph{J. Appl. Probab.} $\mathbf{41}$ (2004),  177--187.

\bibitem{s73} Sichel, H. S.  Statistical valuation of diamondiferous deposits. \emph{J. S. Afr. Inst. Min. Metall.} $\mathbf{73}$ (1973),  235--243.


\bibitem{w12} Winkelbauer, A. Moments and Absolute Moments of the Normal Distribution. arXiv:1209.4340, 2012.

\bibitem{wb32} Wishart, J. and Bartlett, M. S. The distribution of second order moment statistics in a normal system. \emph{Proc. Camb. Philol. Soc.} $\mathbf{28}$ (1932), 455--459.

\end{thebibliography}
\end{document}